\theoremstyle{definition}
\newtheorem{theorem}{Theorem}[]
\newtheorem{proposition}[theorem]{Proposition}
\newtheorem{claim}[]{Claim}
\newtheorem{observation}[theorem]{Observation}
\newtheorem{question}[theorem]{Question}
\numberwithin{equation}{section}
\title{A note on the number of non-cycle components in a pseudo 2-factor of graphs}
\author{Masaki Kashima\thanks{Keio University, Yokohama, Japan. email: masaki.kashima10@gmail.com}}
\begin{document}

\maketitle

\begin{abstract}
    A pseudo 2-factor of a graph is a spanning subgraph such that each component is $K_1$, $K_2$, or a cycle. This notion was introduced by Bekkai and Kouider in 2009, where they showed that every graph $G$ has a pseudo 2-factor with at most $\alpha(G)-\delta(G)+1$ components that are not cycles. For a graph $G$ and a set of vertices $S$, let $\delta_G(S)$ denote the minimum degree of vertices in $S$.
    In this note, we show that every graph $G$ has a pseudo 2-factor with at most $f(G)$ components that are not cycles, where $f(G)$ is the maximum value of $|I|-\delta_G(I)+1$ among all independent sets $I$ of $G$.
    This result is a common generalization of a result by Bekkai and Kouider and a previous result by the author on the existence of a 2-factor.
\end{abstract}
\textbf{Keywords:} 2-factor, pseudo 2-factor, minimum degree, independent set

\section{Introduction}\label{section:intro}

Throughout the paper, we only consider simple, finite, and undirected graphs.
For a graph $G$, let $\delta(G)$ and $\alpha(G)$ denote the minimum degree and the independence number, respectively.
For a graph $G$ and a set $S\subseteq V(G)$, let $N_G(S)$ denote the set of vertices in $V(G)\setminus S$ that have neighbors in $S$.
In particular, for a subgraph $H$ of $G$, we abbreviate $N_G(V(H))$ to $N_G(H)$.
For a positive integer $n$, let $K_n$ denote the complete graph of order $n$.

A \emph{2-factor} of a graph $G$ is a 2-regular spanning subgraph of $G$.
A \emph{Hamilton cycle} of a graph $G$, which is a cycle that passes through all vertices of $G$, is exactly a connected 2-factor.
Thus, sufficient conditions for a graph to have a 2-factor have been actively studied in connection with Hamilton cycles.

As a relaxation of a 2-factor, Bekkai and Kouider~\cite{BK2009} introduced a notion of pseudo 2-factor. The term ``pseudo 2-factor" was coined by them, though the concept had already been studied by Enomoto and Li~\cite{EL2004} in 2004.
A \emph{pseudo 2-factor} of a graph $G$ is a spanning subgraph of $G$ in which each component is isomorphic to $K_1$, $K_2$, or a cycle.
By allowing $K_1$ and $K_2$ as components, it is clear that every graph has a pseudo 2-factor.
Thus sufficient and/or necessary conditions for a graph to have a ``special" pseudo 2-factor have been studied in the literature.

Well before the term pseudo 2-factor established, Tutte~\cite{T1953} gave a sufficient and necessary condition for a graph to have a pseudo 2-factor without isolated vertices. 
Later, by Cornu\'{e}jols and Hartvigsen~\cite{CH1986}, the result was extended to a sufficient and necessary condition for a graph to have a pseudo 2-factor without isolated vertices and small odd cycles.
In 2018, Egawa and Furuya~\cite{EF2018} gave sufficient conditions, which are more easily checkable, for a graph to have a pseudo 2-factor with no isolated vertices and small odd cycles.
From the other aspect, motivated by a result on 2-factor with prescribed number of components, Enomoto and Li~\cite{EL2004} investigated the sufficient degree sum conditions for a graph to have a pseudo 2-factor with exactly $k$ components.
Recently, Chiba and Yoshida~\cite{CY2026} considered an analogue of the result for bipartite graphs.

In this note, we focus on the number of components that are isomorphic to $K_1$ or $K_2$ in a pseudo 2-factor.
A component of a pseudo 2-factor is called a \emph{non-cycle component} if it is isomorphic to $K_1$ or $K_2$.
Since a pseudo 2-factor without non-cycle components is a 2-factor of a graph, we are interested in upper bounds of the number of non-cycle components in a pseudo 2-factor of a given graph.
Bekkai and Kouider~\cite{BK2009} gave the following upper bound.

\begin{theorem}[\cite{BK2009}]\label{thm:original}
    For any graph $G$ with $\alpha(G)\geq \delta(G)$, $G$ has a pseudo 2-factor with at most $\alpha(G)-\delta(G)+1$ non-cycle components.
\end{theorem}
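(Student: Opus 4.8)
The plan is to argue by extremality. Among all pseudo 2-factors of $G$, I would fix one, say $F$, that first minimizes the number of non-cycle components and, subject to that, maximizes the number of vertices lying on cycles. Write $k$ for its number of non-cycle components; let $D$ be the set of $K_1$-components, and let the $K_2$-components be the edges $x_1y_1,\dots,x_ty_t$. It then suffices to produce an independent set $I$ with $|I|\ge k+\delta(G)-1$, since $\alpha(G)\ge |I|\ge k+\delta(G)-1$ gives $k\le \alpha(G)-\delta(G)+1$, as required; the hypothesis $\alpha(G)\ge\delta(G)$ guarantees this target bound is at least $1$, matching the case $k\ge 1$ that must be handled.

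First I would record the structural consequences of minimality that restrict adjacencies between the non-cycle components and the rest of $F$. If two $K_1$-vertices were adjacent, joining them into a $K_2$ would lower the number of non-cycle components, so $D$ is independent. No $K_1$-vertex $v$ is adjacent to both endpoints of a $K_2$, since then $v$ and that edge would span a triangle, merging the two non-cycle components into one cycle. Most importantly, a rotation/insertion lemma holds: if $v\in D$ is adjacent to a vertex $c$ on a cycle $C=c_1c_2\cdots c_\ell$, then $v$ is not adjacent to the successor $c^{+}$ of $c$ on $C$ (otherwise $v$ inserts into the edge $cc^{+}$, lengthening $C$ and destroying a non-cycle component); and for two neighbours $c_i,c_j$ of $v$ lying on cycles $C,C'$ (possibly $C=C'$), the successors $c_i^{+}$ and $c_j^{+}$ are non-adjacent, since otherwise a standard rerouting builds a single cycle on $V(C)\cup V(C')\cup\{v\}$ that absorbs $v$. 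Consequently, for a fixed $v\in D$ the set
\[
I_v=\{v\}\cup\{\,c^{+}: c\in N_G(v)\text{ lies on a cycle of }F\,\}
\]
is independent, and its size is $1$ plus the number of cycle-neighbours of $v$.

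Next I would turn $I_v$ into the desired set $I$ by adjoining one representative from every other non-cycle component while preserving independence: from each remaining $K_1$ its vertex (these are independent of everything by the lemma), and from each $K_2$ the endpoint not forced to be adjacent to the already-chosen vertices. The minimum-degree hypothesis enters through the anchor $v$, which has at least $\delta(G)$ neighbours, each accounted for either by a cycle-successor in $I_v$ or by the $K_2$ containing it; combining these contributions with the representatives of the components not yet used should yield $|I|\ge k+\delta(G)-1$, the single unit of slack reflecting that the anchor is counted once.

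The hard part, I expect, is precisely this bookkeeping around the $K_2$-components. A $K_2$ $xy$ with $v\sim x$ only yields $v\not\sim y$ directly; to place $y$ (or a successor of one of $y$'s neighbours) into $I$ I must rule out adjacencies to the cycle-successors in $I_v$ and to the representatives of the other components, and I must avoid double-counting a neighbour of $v$ that itself lies in such a $K_2$. Resolving this cleanly will likely require strengthening the extremal choice of $F$ (for instance also minimizing the number of $K_1$-components, or maximizing the number of $K_2$-edges) together with further exchange lemmas of the same flavour, one for two $K_2$'s and one for a $K_2$ alongside a cycle, so that the representatives and the successors can be selected simultaneously and independently. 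The degenerate case in which $F$ has no $K_1$-component at all, forcing the anchor to be taken inside a $K_2$, will need a parallel but separate treatment.
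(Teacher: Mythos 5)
Your overall strategy --- fix an extremal pseudo 2-factor, prove rotation/insertion lemmas, and exhibit an independent set of size at least $k+\delta(G)-1$ --- is a reasonable shape for this theorem, and the exchange lemmas you state (independence of $D$, no $K_1$-vertex adjacent to both ends of a $K_2$, independence of $\{v\}\cup\{c^{+}:c\in N_G(v)\text{ on a cycle}\}$) are all correct. But there is a genuine gap exactly where you flag one, and it is not mere bookkeeping: with the set you construct, the count actually fails. Writing $c$ for the number of neighbours of the anchor $v$ on cycles and $t'$ for the number of $K_2$-components meeting $N_G(v)$ (each contributes at most one neighbour of $v$), you have $d_G(v)=c+t'$ and $|I|=|I_v|+(k-1)=k+c=k+d_G(v)-t'$, which reaches the target $k+\delta(G)-1$ only when $t'\le 1$. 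When $v$ has neighbours in two or more $K_2$-components the argument proves nothing, and no choice of representatives repairs the cardinality --- you would need extra vertices (say, successors of cycle-neighbours of the $K_2$-partners) or a different anchor, plus the untreated degenerate case in which every non-cycle component is a $K_2$. As written, the proposal is an outline with its hardest step missing rather than a proof.

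For comparison, the paper does not prove Theorem~\ref{thm:original} this way at all: it deduces it from Theorem~\ref{thm:main} via the inequality $f(G)\le\alpha(G)-\delta(G)+1$, and the proof of Theorem~\ref{thm:main} starts not from an optimal pseudo 2-factor but from a maximum vertex-disjoint union of cycles $F$ (secondarily minimizing isolated vertices of $G-V(F)$). The complement $H=G-V(F)$ is then a forest, Proposition~\ref{prop:forest} turns $H$ into exactly $\alpha(H)$ non-cycle components, and the independent set is built by anchoring at a vertex $x$ with $d_H(x)\le 1$: all but at most one neighbour of $x$ lie on cycles of $F$, so essentially every neighbour of the anchor yields a successor for the independent set (Claims~\ref{claim:next vertex independent}, \ref{claim:degree 0 vertex}, and \ref{claim:degree 1 vertex}), and the $K_2$-versus-$K_2$ interaction that sinks your count never arises. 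That reorganisation --- pushing all non-cycle structure into a forest complement of a maximum cycle packing --- is the idea your sketch is missing.
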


The bound in Theorem~\ref{thm:original} is best possible.
Indeed, for an arbitrary graph $H$ and a positive integer $p\geq |V(H)|+1$, let us consider the graph $G_1$ obtained from $H$ by joining $p$ disjoint copies of $K_2$.
Then it follows that $\delta(G_1)=|V(H)|+1$ and $\alpha(G_1)=p\geq |V(H)|+1$, both of which are satisfied by vertices in copies of $K_2$.
On the other hand, it is easy to see that every pseudo 2-factor of $G_1$ has at least $p-|V(H)|=\alpha(G_1)-\delta(G_1)+1$ non-cycle components since $G_1-V(H)$ consists of $p$ disjoint copies of $K_2$.

Their result with the case $\alpha(G)=\delta(G)$ implies the following theorem by Niessen~\cite{N1995}.

\begin{theorem}[\cite{N1995}]\label{thm:2factor niessen}
    Every graph $G$ with $\delta(G)\geq \alpha(G)+1$ has a 2-factor.
\end{theorem}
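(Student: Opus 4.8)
The plan is to derive Theorem~\ref{thm:2factor niessen} from the case $\alpha(G)=\delta(G)$ of Theorem~\ref{thm:original}, in two stages: first reduce to a graph on which Theorem~\ref{thm:original} applies and yields at most one non-cycle component, and then absorb that last non-cycle component into a cycle using the slack in the degree condition. Write $\alpha=\alpha(G)$ and $\delta=\delta(G)$ and assume $\delta\geq\alpha+1$ (so $\alpha\geq1$ and $\delta\geq2$).

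First I would set up the reduction. Let $A$ be the disjoint union of $\delta-\alpha$ copies of $K_{\delta+1}$ and put $G'=G\sqcup A$. Then $\alpha(A)=\delta-\alpha$ and $\delta(A)=\delta$, so $\alpha(G')=\alpha+(\delta-\alpha)=\delta$ and $\delta(G')=\min\{\delta,\delta\}=\delta$; that is, $\alpha(G')=\delta(G')$. Applying Theorem~\ref{thm:original} to $G'$ produces a pseudo 2-factor $F'$ of $G'$ with at most $\alpha(G')-\delta(G')+1=1$ non-cycle component. Since $V(G)$ is a union of components of $G'$, the restriction $F$ of $F'$ to $V(G)$ is a pseudo 2-factor of $G$ with at most one non-cycle component. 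If $F$ has no non-cycle component then it is already a 2-factor and we are done, so it remains to treat the case in which $F$ has exactly one non-cycle component $D$.

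The heart of the argument, and the step I expect to be the main obstacle, is to show that this single $D$ can always be eliminated; this is where $\delta\geq\alpha+1$ is used. Consider first $D=\{v\}\cong K_1$. Every other vertex lies on a cycle of $F$, so each neighbour $a$ of $v$ has a well-defined successor $a^{+}$ along its oriented cycle. If two neighbours of $v$ are consecutive on a cycle, or if two neighbours $a,b$ satisfy $a^{+}b^{+}\in E(G)$, then a standard rotation reroutes the cycle (or merges two cycles) so as to insert $v$, removing the non-cycle component. Otherwise I would verify that $\{v\}\cup\{a^{+}:a\in N_G(v)\}$ is independent; since distinct neighbours have distinct successors and $v$ is not a successor, this set has size $\deg_G(v)+1\geq\delta+1\geq\alpha+2$, contradicting $\alpha(G)=\alpha$.

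For $D=\{u,w\}\cong K_2$ I would reduce to the previous case in two steps. The vertex $w$ has at least $\delta-1$ neighbours on cycles, and running the same insertion/rotation analysis on $w$ either places $w$ on a cycle or exhibits the independent set $\{w\}\cup\{a^{+}:a\in N_G(w)\setminus\{u\}\}$ of size at least $\delta>\alpha$, a contradiction. Hence $w$ can be moved onto a cycle; dropping the edge $uw$ then leaves $u$ as an isolated vertex with all other vertices on cycles, which is exactly the $K_1$ situation already handled. In every case the last non-cycle component is absorbed, so $G$ has a 2-factor. The delicate points will be the bookkeeping in the rotations (checking that the rerouted object is a single cycle and that distinct neighbours really do have distinct successors) and confirming that the surplus $\delta\geq\alpha+1$ is precisely what forces the independent sets to exceed $\alpha$.
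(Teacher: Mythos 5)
Your proposal is correct, but it reaches Niessen's theorem by a genuinely different route than the paper does. The paper treats Theorem~\ref{thm:2factor niessen} as an immediate corollary of Theorem~\ref{thm:2factor} (equivalently of Theorem~\ref{thm:main}): if $\delta(G)\geq\alpha(G)+1$ then every independent set $I$ satisfies $\delta_G(I)\geq\delta(G)\geq\alpha(G)+1\geq|I|+1$, i.e.\ $f(G)\leq 0$, so a 2-factor exists outright --- a two-line deduction that leans on the stronger theorems. You instead bootstrap from Theorem~\ref{thm:original} alone: padding $G$ with $\delta-\alpha$ disjoint copies of $K_{\delta+1}$ to force $\alpha(G')=\delta(G')$ is a clean way around the hypothesis $\alpha\geq\delta$, and it correctly yields a pseudo 2-factor of $G$ with at most one non-cycle component; your absorption step (insert $v$ between consecutive neighbours, or reroute via an edge $a^+b^+$, or else exhibit the independent set $\{v\}\cup\{a^+:a\in N_G(v)\}$ of size at least $\delta+1>\alpha$) is sound, including the two-stage handling of a $K_2$ component, and the degenerate subcases ($a^+=b$, $b^+=a$) all collapse into the insertion case. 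What is notable is that this absorption argument is essentially the single-component special case of the machinery the paper builds for Theorem~\ref{thm:main} (Claims~\ref{claim:next vertex independent}, \ref{claim:degree 0 vertex}, and \ref{claim:degree 1 vertex} and the walk-rerouting in Cases 1 and 2), so your proof in effect substantiates the paper's otherwise unexplained remark that Bekkai and Kouider's result ``implies'' Niessen's theorem. The trade-off: the paper's derivation is shorter but presupposes deeper results; yours is longer but self-contained modulo Theorem~\ref{thm:original}.
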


Recently, the author showed the following result, which extends Theorem~\ref{thm:2factor niessen} in a different way.
For a vertex set $S$ of a graph $G$, let $\delta_G(S)$ denote the minimum degree of the vertices in $S$.

\begin{theorem}[\cite{Karxiv}]\label{thm:2factor}
    If every independent set $I$ of $G$ satisfies $\delta_G(I)\geq |I|+1$, then $G$ has a 2-factor.
\end{theorem}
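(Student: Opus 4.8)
The plan is to deduce the statement from Tutte's $f$-factor theorem applied with $f\equiv 2$. Write $d_G(v)$ for the degree of $v$ in $G$ and, for disjoint $X,Y\subseteq V(G)$, write $e_G(X,Y)$ for the number of edges of $G$ with one endpoint in $X$ and the other in $Y$. In this language the theorem asserts that $G$ has a $2$-factor if and only if
\[
\theta(S,T):=2|S|+\sum_{v\in T}\bigl(d_{G-S}(v)-2\bigr)-q(S,T)\ \ge\ 0
\]
for all disjoint $S,T\subseteq V(G)$, where $q(S,T)$ counts the components $C$ of $G-(S\cup T)$ for which $e_G(V(C),T)$ is odd. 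Since $2|V(G)|$ is even, the standard parity lemma gives $\theta(S,T)\equiv 0\pmod 2$, so it suffices to rule out $\theta(S,T)\le-2$. First I would assume for contradiction that some pair $(S,T)$ satisfies $\theta(S,T)\le-2$, and among all such pairs fix one that is extremal, say with $|S|$ minimum and then $|T|$ minimum.

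The engine of the argument is the elementary but crucial observation that vertices lying in distinct components of $G-(S\cup T)$ are non-adjacent in $G$; hence choosing one vertex from each of the $q(S,T)$ odd components produces an independent set $I$ with $|I|=q(S,T)$. By the hypothesis of the theorem, $\delta_G(I)\ge|I|+1=q(S,T)+1$, so the whole proof reduces to exhibiting, in some odd component, a vertex $v$ with $d_G(v)\le q(S,T)$. For such a $v$ in a component $C$ one has $d_G(v)=e_G(\{v\},S)+e_G(\{v\},T)+d_{G[V(C)]}(v)$, so I would aim to use extremality to force $S$ to be small, the edges between $T$ and the odd components to be sparse, and the odd components to be simple enough (ideally single vertices) that a representative of small degree exists.

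The technical core, and the step I expect to be the main obstacle, is extracting these structural constraints from the extremal choice of $(S,T)$. The relevant exchange operations are moving a vertex of $T$ into $S$, deleting from $T$ a vertex with no neighbour outside $S$, and deleting from $S$ a vertex adjacent to few odd components; each changes $\theta$ by a bounded amount, and minimality should force, for example, that every $t\in T$ satisfies $d_{G-S}(t)\le 2$ and that every $s\in S$ is adjacent to several odd components (so that $|S|$ is controlled by the number of odd components). The delicate point throughout is bookkeeping how $q(S,T)$ shifts when these moves flip the parity of $e_G(V(C),T)$ for the affected components $C$, since it is precisely this term that resists naive estimates. Once the structure is pinned down, combining the counterexample inequality $q(S,T)\ge 2|S|+\sum_{v\in T}(d_{G-S}(v)-2)+2$ with the degree decomposition above should produce an odd component containing a vertex of degree at most $q(S,T)=|I|$, contradicting $\delta_G(I)\ge|I|+1$ and thereby establishing the $2$-factor. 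As a sanity check, in the special case $\delta(G)\ge\alpha(G)+1$ every representative already has $d_G(v)\ge\delta(G)\ge\alpha(G)+1>|I|$, so the argument specialises to a proof of Theorem~\ref{thm:2factor niessen}.
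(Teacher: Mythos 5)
Your reduction to Tutte's $2$-factor theorem is a genuinely different route from the paper: there, Theorem~\ref{thm:2factor} is never attacked directly but falls out of Theorem~\ref{thm:main} (the hypothesis forces $|I|-\delta_G(I)+1\le 0$ for every independent set, hence $f(G)\le 0$ and the guaranteed pseudo 2-factor has no non-cycle components), and Theorem~\ref{thm:main} itself is proved by an extremal argument on a maximum vertex-disjoint union of cycles rather than by an $f$-factor criterion. A self-contained Tutte-type proof would therefore be of independent interest, and your setup (the form of $\theta(S,T)$, the parity reduction to $\theta\le -2$, and the observation that one representative per odd component yields an independent set of size $q(S,T)$) is correct. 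The problem is that the entire technical core --- extracting from the extremal choice of $(S,T)$ a vertex of degree at most $q(S,T)$ in some odd component --- is only asserted to be something that ``should'' follow; that is precisely the part of the proof that cannot be waved through, so as written this is a plan rather than a proof.

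Moreover, the endgame is aimed at the wrong place, so the missing step would not go through as described. The structural claim ``some odd component contains a vertex of degree at most $q(S,T)$'' is not a consequence of $\theta(S,T)\le -2$: a violating pair can have $q(S,T)=0$ (take $S=\emptyset$, $T=\{v\}$ with $d_G(v)=0$), in which case your $I$ is empty, and it can have its only odd component be a large clique attached to $T$ by a single edge, in which case every vertex of every odd component has degree far exceeding $q(S,T)$. In such configurations the vertex witnessing the failure of the hypothesis lies in $T$, not in a component. The independent set must therefore be built from the component representatives \emph{together with} the vertices $t\in T$ with $d_{G-S}(t)=0$ (these are pairwise non-adjacent and non-adjacent to the components, and each has $d_G(t)\le |S|$), and large odd components have to be neutralized not by finding a low-degree vertex inside them but by charging their edges to $T$ against the term $\sum_{t\in T}\bigl(d_{G-S}(t)-2\bigr)$. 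Until that bookkeeping is carried out --- and it is the whole difficulty --- the proposal has a genuine gap.
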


If a graph $G$ satisfies $\delta(G)\geq \alpha(G)+1$, then every independence set $I$ of $G$ satisfies 
\[
\delta_G(I)\geq \delta(G)\geq \alpha(G)+1\geq |I|+1,
\]
and hence Theorem~\ref{thm:2factor niessen} holds from Theorem~\ref{thm:2factor}.
In this note, we show the following result, which generalizes both Theorems~\ref{thm:original} and \ref{thm:2factor} (and obviously Theorem~\ref{thm:2factor niessen}). 
For a graph $G$, let 
\[
f(G):=\max\{|I|-\delta_G(I)+1\mid I\text{ is an independent set of }G\}.
\]
Then the following holds.

\begin{theorem}\label{thm:main}
    Every graph $G$ has a pseudo 2-factor with at most $\max\{0,f(G)\}$ non-cycle components.
\end{theorem}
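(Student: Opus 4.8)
The plan is to argue by contradiction from an extremal pseudo $2$-factor, reducing the statement to the production of a single witnessing independent set, in the spirit of the proof of Theorem~\ref{thm:2factor}. First I would record the useful reformulation that a pseudo $2$-factor is precisely a vertex-disjoint union of cycles together with a matching and a set of isolated vertices on the remaining vertices; hence, after fixing the cycles and taking a maximum matching on the uncovered set $U$, the number of non-cycle components equals $|U|-\mu$, where $\mu$ is the maximum matching number of $G[U]$. I would then choose a pseudo $2$-factor $F$ that minimizes the number $c$ of non-cycle components, and, subject to that, maximizes the number of vertices lying on cycles (and, if needed, the total cycle length). If $c=0$ we are done, so assume $c\ge 1$; it then suffices to exhibit an independent set $I$ with $|I|-\delta_G(I)+1\ge c$, since this yields $f(G)\ge c$ and hence $c\le\max\{0,f(G)\}$.

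The construction of $I$ would be driven by local exchange and insertion moves that are blocked by the extremality of $F$. Writing $A$ for the set of $K_1$-vertices and letting $bb'$ range over the $K_2$-edges, minimality of $c$ forbids: merging two $K_1$-vertices joined by an edge into a $K_2$ (so $A$ is independent); completing a $K_1$ and an incident $K_2$ into a triangle; and, crucially, any \emph{insertion} that absorbs a non-cycle component into a cycle. For instance, if $a\in A$ were adjacent to two consecutive vertices $p,p^+$ of a cycle $C$ of $F$, replacing the edge $pp^+$ by the path $p\,a\,p^+$ would reduce $c$; the analogous move shows no $K_2$-edge $bb'$ admits a cycle edge $pp^+$ with $b\sim p$ and $b'\sim p^+$. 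Consequently one obtains constraints of the form: whenever a component has a neighbor $v$ on a cycle, a suitable successor $v^+$ along that cycle is \emph{missed} by (non-adjacent to) that component. Taking $A$ together with one endpoint from each $K_2$ as a base set, I would enlarge it by these blocked successor vertices, using the secondary extremal choices to guarantee independence of the result (rerouting along a cycle whenever two chosen successors turned out to be adjacent).

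The decisive and hardest step is the degree count: I must show that the independent set $I$ so produced satisfies $\delta_G(I)\le |I|-c+1$. The idea is to charge, for a vertex $z\in I$ of minimum degree $d=\delta_G(I)$, each of its $d$ neighbors to a distinct member of $I$ through the successor correspondence above, while each of the $c$ non-cycle components contributes one unit of slack; aggregating these contributions across all components is exactly the bookkeeping that extends the argument behind Theorem~\ref{thm:2factor} from the matching-free case $f(G)\le 0$ to a budget of $c-1$. I expect this aggregation---keeping the successor set independent while simultaneously certifying the bound on $\delta_G(I)$---to be the main obstacle, since the $K_2$-components and the maximum matching on $U$ introduce alternating-path phenomena absent in the pure $2$-factor setting. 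I note in passing that the tempting shortcut of adjoining a clique $Q$ with $|Q|=\max\{0,f(G)\}$ joined completely to $G$, applying Theorem~\ref{thm:2factor} to the resulting graph (whose $f$-value drops to at most $0$), and then deleting $Q$ does not work directly: deleting $Q$ from a $2$-factor leaves cycles together with up to $|Q|$ \emph{paths}, and a path on at least three vertices is not an admissible component, so a direct extremal argument seems unavoidable.
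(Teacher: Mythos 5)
Your overall strategy (extremal cycle system, insertion arguments showing that successors of neighbours form an independent set, then a counting argument producing a witnessing independent set) matches the paper's skeleton, and your Claim-1-type observation is exactly the paper's Claim~\ref{claim:next vertex independent}. But the step you yourself flag as the ``decisive and hardest'' one is a genuine gap, and the way you propose to close it will not work as stated. Your base independent set ($K_1$-vertices plus one endpoint of each $K_2$) has size $c$, so you need $\delta_G(I)\leq |I|-c+1$, i.e.\ you must enlarge the base set by at least $\delta_G(I)-1$ blocked successors without raising $\delta_G(I)$ or destroying independence. Charging ``each of the $d$ neighbors to a distinct member of $I$'' across all components simultaneously is precisely what has no obvious implementation: the successors you want to add may themselves be adjacent to the leftover set, and nothing in your extremal choice forbids that. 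The paper avoids this aggregation entirely. It takes $F$ to be a maximum union of disjoint cycles, so that $H=G-V(F)$ is a \emph{forest}; then $\alpha(H)$ equals the number of non-cycle components (Proposition~\ref{prop:tree}, via K\"onig), and by Observation~\ref{obs:leaf independent set} one can take a maximum independent set $I$ of $H$ containing a \emph{prescribed} vertex $x$ with $d_H(x)\leq 1$. The witnessing set is then $I$ plus only those successors of $N_G(x)\cap V(F)$ that are \emph{not} adjacent to $H$; this makes the degree count a one-liner, $\delta_G(I')\leq d_G(x)$ and $|I'|\geq \alpha(H)+d_G(x)-1$, with no global charging needed. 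Your proposal is missing both ingredients that make this work: the forest structure of the leftover (your extremal choice does eventually yield it, but you never use $\alpha$ of the forest as the count of non-cycle components) and the leaf-containing maximum independent set.

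The second, equally serious omission is that the independent-set count alone cannot finish the proof: it only yields a contradiction when the relevant successors are \emph{not} adjacent to $H$ (Claims~\ref{claim:degree 0 vertex} and~\ref{claim:degree 1 vertex} are exactly the statements that too many such adjacencies must exist). In the remaining case the paper switches to a completely different mechanism: it chains through components $D_0,\dots,D_r$ of $H$ via vertices $y_i$ on $F$ whose successors see $H$, and performs cycle surgery with the walks $W_0,\dots,W_r$ to build a $2$-regular subgraph that is either larger than $F$ or leaves fewer isolated vertices, contradicting the (two-level) extremal choice of $F$. Roughly half the proof lives in this augmentation step, and your proposal contains no counterpart to it; your ``rerouting along a cycle whenever two chosen successors turned out to be adjacent'' gestures at local repairs but does not address the case where a successor is adjacent to the uncovered part of the graph, which is the case that actually forces the augmentation. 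Your closing remark that the ``add a dominating clique $Q$ and apply Theorem~\ref{thm:2factor}'' shortcut fails is correct and well observed, but the extremal argument you propose in its place is not yet a proof.
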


We will give a proof of Theorem~\ref{thm:main} in the next section.

When every independent set $I$ of $G$ satisfies $\delta_G(I)\geq |I|+1$, then obviously $f(G)\leq 0$ and Theorem~\ref{thm:main} implies that $G$ has a 2-factor.
Also, for any graph $G$ and any independent set $I$ of $G$, we have $|I|-\delta_G(I)+1\leq \alpha(G)-\delta(G)+1$, and hence $f(G)\leq \alpha(G)-\delta(G)+1$.
Thus, Theorem~\ref{thm:main} implies both Theorems~\ref{thm:original} and \ref{thm:2factor}.
By the tightness of the bound in Theorem~\ref{thm:original}, the bound in Theorem~\ref{thm:main} is best possible as well.

We remark that the gap between $f(G)$ and $\alpha(G)-\delta(G)+1$ can be arbitrarily large.
For a positive integer $k$, we set two vertices $v_1,v_2$, two disjoint independent sets $A_1, A_2$ of order $k$, and a complete graph $B$ of order at least $2k$.
Let $G_2$ be a graph obtained by $v_1$, $v_2$, $A_1$, $A_2$, and $B$ by joining $A_i$ to $B\cup \{v_i\}$ for each $i\in \{1,2\}$ (Figure~\ref{fig:gap}).
Then it follows that $\delta(G_2)=d_G(v_1)=k$ and $\alpha(G_2)=|A_1\cup A_2|=2k$, implying that $\alpha(G_2)-\delta(G_2)+1=2k-k+1=k+1$.
On the other hand, since all the maximal independent sets of $G_2$ are $I_1=\{v_1,v_2,b\}$ with $b\in B$, $I_2=v_1\cup A_2$, $I_3=v_2\cup A_1$, and $I_4=A_1\cup A_2$, we have 
\[
f(G_2)=|I_2|-\delta_G(I_2)+1=(k+1)-k+1=2.
\]
Thus, the bound in Theorem~\ref{thm:main} is strictly smaller than that in Theorem~\ref{thm:original}.

\begin{figure}
    \centering
    \begin{tikzpicture}[roundnode/.style={circle, draw=black,fill=black, minimum size=1.5mm, inner sep=0pt}]
    \draw [rounded corners] 
    (-2.5,0.3)--(-0.3,0.3)--(-0.3,-0.3)--(-2.5,-0.3)--cycle;
    \draw [rounded corners] 
    (2.5,0.3)--(0.3,0.3)--(0.3,-0.3)--(2.5,-0.3)--cycle;
    \draw [rounded corners] 
    (-2.5,2.3)--(2.5,2.3)--(2.5,1.7)--(-2.5,1.7)--cycle;
    
    \node [roundnode] (v1) at (-1.4,-1){};
    \node [roundnode] (v2) at (1.4,-1){};

    \draw (v1)--(-2.2,-0.3);
    \draw (v1)--(-0.6,-0.3);
    \draw (v2)--(2.2,-0.3);
    \draw (v2)--(0.6,-0.3);
    \draw (-2.2,0.3)--(-2.2,1.7);
    \draw (-0.6,0.3)--(2.2,1.7);
    \draw (0.6,0.3)--(-2.2,1.7);
    \draw (2.2,0.3)--(2.2,1.7);

    \node at (-1,-1){$v_1$};
    \node at (1.8,-1){$v_2$};
    \node at (-2.8,0){$A_1$};
    \node at (2.8,0){$A_2$};
    \node at (2.8,2){$B$};
    \node at (-1.4,0){$\overline{K_k}$};
    \node at (1.4,0){$\overline{K_k}$};
    \node at (0,2){$K_\ell$ ($\ell\geq 2k$)};
    \end{tikzpicture}
    \caption{A graph $G_2$ which has a gap between $f(G_2)$ and $\alpha(G_2)-\delta(G_2)+1$.}
    \label{fig:gap}
\end{figure}
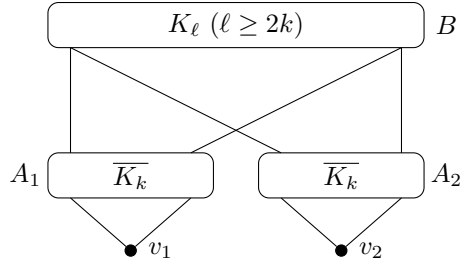

\section{Proofs}\label{section:proof}

We first show some statements used in our proof of Theorem~\ref{thm:main}.
The following observation can be easily verified.

\begin{observation}\label{obs:leaf independent set}
    For every tree $T$ and every leaf $u$ of $T$, $T$ has a maximum independent set that contains $u$.
\end{observation}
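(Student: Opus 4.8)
The plan is to take an arbitrary maximum independent set $I$ of $T$ and, if it fails to contain $u$, repair it by a single local exchange that preserves both independence and cardinality. Since $u$ is a leaf, it has a unique neighbor, which I will call $v$; this is the only structural feature of $T$ that the argument uses, so the statement in fact holds for any graph having a vertex of degree one, with the tree hypothesis playing no essential role.

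First I would dispose of the trivial case: if $u \in I$, then $I$ itself is the desired maximum independent set and there is nothing more to do, so I would assume $u \notin I$. The crucial step is to argue that $v \in I$. Indeed, if $v$ were also absent from $I$, then since $v$ is the only neighbor of $u$, the set $I \cup \{u\}$ would still be independent; but it is strictly larger than $I$, contradicting the maximality of $I$. Hence $v \in I$. With this established, I would perform the exchange $I' := (I \setminus \{v\}) \cup \{u\}$. This set contains $u$ and has the same cardinality as $I$, since exactly one vertex is removed and one is added. It remains independent: deleting $v$ from the independent set $I$ cannot create an edge, while the newly added vertex $u$ has its only neighbor $v$ removed, so $u$ has no neighbor in $I'$. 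Therefore $I'$ is an independent set of size $|I|$, hence a maximum independent set, and it contains $u$, as required.

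I do not expect any substantial obstacle here. The only point that genuinely invokes the maximality hypothesis is the claim $v \in I$, and the subsequent exchange is a routine verification. The argument is entirely local around the leaf $u$, which is precisely why the global tree structure never enters and why the proof will be short.
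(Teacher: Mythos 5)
Your proof is correct: the paper states this observation without proof (``can be easily verified''), and your exchange argument---if $u\notin I$ then its unique neighbor $v$ must lie in $I$ by maximality, so $(I\setminus\{v\})\cup\{u\}$ is an independent set of the same size containing $u$---is precisely the routine verification intended. Your remark that only the existence of a degree-one vertex is used, not the tree structure, is also accurate.
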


By using well-known K\"{o}nig's theorem on matchings and vertex covers of bipartite graphs, we can show the following.

\begin{proposition}\label{prop:tree}
    Every tree $T$ has a pseudo 2-factor with exactly $\alpha(T)$ components.
\end{proposition}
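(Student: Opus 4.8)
The plan is to use the fact that, because $T$ is acyclic, no subgraph of $T$ can contain a cycle, so in every pseudo 2-factor of $T$ each component must be isomorphic to $K_1$ or $K_2$. Such a spanning subgraph is completely determined by a matching $M$ of $T$: the $K_2$-components are exactly the edges of $M$, while the remaining $|V(T)|-2|M|$ vertices appear as $K_1$-components. Consequently the total number of components equals
\[
|M| + \bigl(|V(T)|-2|M|\bigr) = |V(T)| - |M|.
\]
Thus producing a pseudo 2-factor with few components is precisely the problem of finding a large matching, and any matching yields at least $|V(T)|-\nu(T)$ components, where $\nu(T)$ denotes the matching number; equality holds for a maximum matching.

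It then remains to identify $|V(T)|-\nu(T)$ with $\alpha(T)$. First I would take a maximum matching $M$ of $T$ and form the associated pseudo 2-factor, whose number of components is $|V(T)|-\nu(T)$ by the count above. To evaluate this quantity I would invoke two classical facts. Since a tree contains no cycle, it contains no odd cycle and is therefore bipartite, so K\"{o}nig's theorem gives $\nu(T)=\tau(T)$, where $\tau(T)$ is the minimum size of a vertex cover. Moreover, for every graph the complement of a vertex cover is an independent set and vice versa, so $\alpha(T)+\tau(T)=|V(T)|$. Combining these identities yields
\[
\alpha(T) = |V(T)| - \tau(T) = |V(T)| - \nu(T),
\]
which is exactly the number of components of the pseudo 2-factor constructed. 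Hence this pseudo 2-factor has exactly $\alpha(T)$ components, as required.

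Because the construction is this direct, I do not expect a serious obstacle. The only points that require care are (i) justifying that a pseudo 2-factor of a tree can contain no cycle component, so that it is genuinely equivalent to a matching together with isolated vertices, and (ii) correctly chaining K\"{o}nig's theorem with the identity $\alpha(T)+\tau(T)=|V(T)|$ to translate the matching number into the independence number. An inductive argument via the leaf described in Observation~\ref{obs:leaf independent set} would also work, but the matching-based argument above is cleaner and uses exactly the advertised tool.
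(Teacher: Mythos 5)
Your proposal is correct and follows essentially the same route as the paper: take a maximum matching of the tree, turn it into a pseudo 2-factor by adding the uncovered vertices as $K_1$-components, and identify the component count with $\alpha(T)$ via K\"{o}nig's theorem together with the complementarity of vertex covers and independent sets. The extra observation that every pseudo 2-factor of a tree is forced to be of this matching-plus-isolated-vertices form is a harmless (and correct) addition not needed for the statement.
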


\begin{proof}
    The statement is trivial when $|V(T)|=1$.
    Suppose that $n:=|V(T)|\geq 2$.
    Let $\beta(T)$ be the minimum cardinality of a vertex cover of $T$.
    
    For every independent set $I$ of $T$,
    $V(T)\setminus I$ is a vertex cover of $T$ since every vertex in $I$ has at least one neighbor in $T$ that must be in $V(T)\setminus I$.
    Thus, we have $\beta(T)=n-\alpha(T)$.
    Since $T$ is a bipartite graph, by K\"{o}nig's theorem, $T$ has a matching $M$ with $\beta(T)=n-\alpha(T)$ edges.
    Combining all the edges in $M$ and the vertices in $V(G)-V(M)$, we obtain a pseudo 2-factor of $T$ with 
    \[
    |M|+(n-2|M|)=n-\alpha(T)+(n-2(n-\alpha(T)))=\alpha(T)
    \]
    components, as desired.
\end{proof}

By Observation~\ref{obs:leaf independent set}, every tree $T$ satisfies $f(T)\leq \alpha(T)-1+1=\alpha(T)$, and hence Proposition~\ref{prop:tree} states that Theorem~\ref{thm:main} holds for trees.
Furthermore, this directly implies the following.

\begin{proposition}\label{prop:forest}
    Every forest $G$ has a pseudo 2-factor with exactly $\alpha(G)$ components.
    In particular, every forest $G$ has a pseudo 2-factor with at most $f(G)$ components.
\end{proposition}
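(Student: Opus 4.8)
The plan is to reduce the statement for forests to the already-settled case of trees (Proposition~\ref{prop:tree}) by working one connected component at a time. First I would write the forest $G$ as the disjoint union of its tree components $T_1,\dots,T_m$. Since no edges run between distinct components, a spanning subgraph of $G$ is a pseudo 2-factor if and only if its restriction to each $T_i$ is a pseudo 2-factor of $T_i$, and the total number of components is the sum of the numbers of components over the pieces. Applying Proposition~\ref{prop:tree} to each $T_i$ gives a pseudo 2-factor of $T_i$ with exactly $\alpha(T_i)$ components, and taking their union produces a pseudo 2-factor of $G$ with $\sum_{i=1}^m \alpha(T_i)$ components.

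Next I would invoke the additivity of the independence number over connected components, namely $\alpha(G)=\sum_{i=1}^m \alpha(T_i)$, which holds because a maximum independent set of $G$ is obtained by choosing a maximum independent set in each $T_i$ independently. Combined with the previous step, this yields a pseudo 2-factor of $G$ with exactly $\alpha(G)$ components, proving the first assertion. I would also record here that, since $G$ is acyclic, every component of any pseudo 2-factor of $G$ is a $K_1$ or a $K_2$; hence all $\alpha(G)$ components of the pseudo 2-factor just built are non-cycle components.

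For the \emph{in particular} clause it then suffices to verify $\alpha(G)\le f(G)$. I would certify $f(G)\ge\alpha(G)$ by exhibiting a single independent set $I$ with $|I|-\delta_G(I)+1\ge\alpha(G)$. A nonempty forest contains a vertex $u$ of degree at most $1$; let $T_j$ be its component. If $T_j=K_1$, then $\{u\}$ is a maximum independent set of $T_j$ containing $u$; otherwise $u$ is a leaf of $T_j$ and Observation~\ref{obs:leaf independent set} supplies a maximum independent set of $T_j$ containing $u$. Choosing a maximum independent set in each remaining component and taking the union gives a maximum independent set $I$ of $G$ with $u\in I$, so that $\delta_G(I)\le\deg_G(u)\le 1$ and therefore $f(G)\ge |I|-\delta_G(I)+1\ge \alpha(G)-1+1=\alpha(G)$.

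This argument is essentially bookkeeping, so I do not expect a genuine obstacle. The only points deserving care are the additivity of $\alpha$ over components and the choice of the certifying set $I$: one must ensure that the low-degree vertex $u$ can be extended to a maximum independent set of the \emph{whole} forest, which is exactly what the component-wise additivity guarantees.
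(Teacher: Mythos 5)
Your proof is correct and takes essentially the same route as the paper, which simply asserts that the forest case follows ``directly'' from Proposition~\ref{prop:tree}; you have merely made explicit the component-wise union, the additivity of $\alpha$, and the certificate independent set giving $f(G)\ge\alpha(G)$. (Incidentally, your inequality $f(G)\ge\alpha(G)$ is the correct direction; the paper's remark preceding the proposition writes $f(T)\le\alpha(T)-1+1$, which appears to be a typo for $\ge$.)
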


Now we prove Theorem~\ref{thm:main}.

\subsection{Proof of Theorem~\ref{thm:main}}\label{subsection:proof}

The case $G$ is a forest is done by Proposition~\ref{prop:forest}.
Suppose that $G$ has at least one cycle.
Let $F$ be a union of pairwise vertex-disjoint cycles of $G$ such that
\begin{enumerate}[label=(\alph*)]
    \item\label{cond:order} $|V(F)|$ is as large as possible, and
    \item\label{cond:isolated vertex} subject to (a), the number of isolated vertices in $G-V(F)$ is as small as possible.
\end{enumerate}
If $V(F)=V(G)$, then obviously $F$ is a 2-factor of $G$ and we are done. 
Thus, we assume that $V(G)\setminus V(F)\neq \emptyset$.
By the maximality of $F$, it follows that $H:=G-V(F)$ is a forest.
Set $\alpha:=\alpha(H)$.
By Proposition~\ref{prop:forest}, $H$ has a pseudo 2-factor $F_H$ of exactly $\alpha$ components.
Then $F\cup F_H$ is a pseudo 2-factor of $G$ with exactly $\alpha$ non-cycle components.
Thus, it suffices to show that $\alpha\leq f(G)$.

Assume to the contrary that $\alpha>f(G)$.
We set one orientation of each cycle $C$ of $F$.
For each vertex $v\in V(F)$, let $v^+$ denote the successor of $v$ and let $v^-$ denote the predecessor of $v$ along the orientation of the cycle of $F$ containing $v$.

\begin{claim}\label{claim:next vertex independent}
    For a vertex $x$ of $H$, if $x$ has two neighbors $y_1$ and $y_2$ in $V(F)$, then $y_1^+y_2^+\notin E(G)$.
\end{claim}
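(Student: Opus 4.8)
The plan is to prove the claim by contradiction, using only the length-maximality condition~\ref{cond:order}. So suppose $x$ is a vertex of $H$ with two (necessarily distinct) neighbours $y_1,y_2\in V(F)$ and, contrary to the claim, that $y_1^+y_2^+\in E(G)$. Having fixed the orientation of each cycle of $F$, I would use the three edges $xy_1$, $xy_2$, and $y_1^+y_2^+$ to perform a rotation that reassembles the relevant arcs of $F$ into a single longer cycle that also absorbs $x$. Replacing the affected cycle(s) of $F$ by this new cycle, while keeping all other cycles of $F$ untouched, then yields a union of pairwise vertex-disjoint cycles on $|V(F)|+1$ vertices, contradicting the choice of $F$.

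The argument splits into two cases according to whether $y_1$ and $y_2$ lie on the same cycle of $F$. If they lie on a common cycle $C$, I delete the two edges $y_1y_1^+$ and $y_2y_2^+$, which breaks $C$ into two paths with endpoints $\{y_1,y_2^+\}$ and $\{y_1^+,y_2\}$; I then reconnect these paths through $x$ on one side (via $xy_1$ and $xy_2$) and through the bridging edge $y_1^+y_2^+$ on the other, reversing the appropriate sub-path. This produces a cycle passing through $x$ and through every vertex of $C$ exactly once, hence of length $|V(C)|+1$. If instead $y_1$ and $y_2$ lie on distinct cycles $C_1$ and $C_2$, I traverse $C_1$ from $y_1$ all the way around to $y_1^+$, cross into $C_2$ along $y_1^+y_2^+$, traverse $C_2$ from $y_2^+$ around to $y_2$, and finally close up through $y_2xy_1$; this merges $C_1$, $C_2$, and $x$ into one cycle of length $|V(C_1)|+|V(C_2)|+1$. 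In either case the new cycle, together with the remaining cycles of $F$, contradicts~\ref{cond:order}.

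I expect the only delicate point to be the routine verification that each reassembled closed walk is a genuine cycle, namely that every listed vertex is visited exactly once and that every edge used actually lies in $G$. The sub-cases requiring care are the degenerate ones in which $y_1$ and $y_2$ are consecutive on a cycle, so that $y_1^+=y_2$ or $y_2^+=y_1$ and the ``bridging'' edge $y_1^+y_2^+$ is in fact an ordinary cycle edge; there the general rotation collapses, but the conclusion is even more transparent, since $x$ is then adjacent to two consecutive vertices of a cycle and can simply be inserted between them to lengthen that cycle by one. Note that condition~\ref{cond:isolated vertex} plays no role here: the entire claim follows from the length-maximality~\ref{cond:order} alone.
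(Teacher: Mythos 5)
Your proposal is correct and is essentially the paper's own argument: the paper likewise forms $F'=F\cup\{y_1x,y_2x,y_1^+y_2^+\}-\{y_1y_1^+,y_2y_2^+\}$, observes it is a 2-regular subgraph on $V(F)\cup\{x\}$, and contradicts the maximality condition~\ref{cond:order}. Your case analysis (same cycle versus different cycles, plus the degenerate consecutive case) just spells out the verification that the paper leaves implicit.
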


\begin{proof}
    Assume to the contrary that $y_1^+y_2^+\in E(G)$ for some $y_1, y_2\in N_G(x)\cap V(F)$.
    Then, $F'=F\cup \{y_1x,y_2x, y_1^+y_2^+\}-\{y_1y_1^+,y_2y_2^+\}$ is a 2-regular subgraph of $G$ such that $V(F')=V(F)\cup \{x\}$, a contradiction by the maximality of $F$.
\end{proof}

\begin{claim}\label{claim:degree 0 vertex}
    For every isolated vertex $x$ of $H$, there are two vertices $y, y'\in N_G(x)\cap V(F)$ such that $N_G(y^+)\cap V(H)\neq \emptyset$ and $N_G(y'^+)\cap V(H)\neq \emptyset$.
\end{claim}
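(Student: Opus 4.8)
The plan is to argue by contradiction, assuming that \emph{at most one} neighbor $y$ of $x$ satisfies $N_G(y^+)\cap V(H)\neq\emptyset$, and to turn this assumption into an independent set $I^*$ whose value $|I^*|-\delta_G(I^*)+1$ reaches $\alpha$, contradicting the standing assumption $\alpha>f(G)$. First I would fix the right ambient independent set: since $x$ is an isolated vertex of $H$, it lies in \emph{every} maximum independent set of $H$ (otherwise adjoining $x$ to such a set would enlarge it), so I fix a maximum independent set $I_0$ of $H$ with $x\in I_0$ and $|I_0|=\alpha$. Moreover, as $x$ is isolated in $H$, all of its neighbors lie in $V(F)$, so $d_G(x)=|N_G(x)\cap V(F)|$.

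Next I would record two structural facts about the successors of $N_G(x)$. By the maximality of $F$: if $x$ were adjacent to both $w$ and $w^+$ for some $w\in V(F)$, then inserting $x$ between $w$ and $w^+$ would produce a larger union of disjoint cycles; hence $xw^+\notin E(G)$ for every $w\in N_G(x)$. And Claim~\ref{claim:next vertex independent} shows that the successors $\{w^+: w\in N_G(x)\}$ are pairwise non-adjacent. Now set
\[
B:=\{y\in N_G(x): N_G(y^+)\cap V(H)=\emptyset\},\qquad S:=\{y^+: y\in B\},
\]
so that the contradiction hypothesis gives $|B|\geq d_G(x)-1$, and $|S|=|B|$ because distinct vertices of a cycle have distinct successors.

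I then claim $I^*:=I_0\cup S$ is independent. Indeed, $I_0$ is independent; each vertex of $S$ has no neighbor in $V(H)\supseteq I_0$ by the definition of $B$; the vertices of $S$ are mutually non-adjacent by Claim~\ref{claim:next vertex independent}; and $x\in I_0$ is non-adjacent to $S$ by the first structural fact above. Since $I_0\subseteq V(H)$ and $S\subseteq V(F)$ are disjoint, $|I^*|=\alpha+|S|$. The key estimate is that $x\in I^*$ is a low-degree witness: from $|B|\geq d_G(x)-1$ we get $d_G(x)\leq|S|+1$, so $\delta_G(I^*)\leq d_G(x)\leq|S|+1$, and therefore
\[
f(G)\ \geq\ |I^*|-\delta_G(I^*)+1\ \geq\ (\alpha+|S|)-(|S|+1)+1\ =\ \alpha,
\]
contradicting $\alpha>f(G)$.

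The step that needs the most care is the bookkeeping behind $I^*$: verifying that the successors in $S$ are genuinely distinct and disjoint from $I_0$ (which is exactly where "no neighbor in $V(H)$" is used), and, crucially, that it is $x$ itself — not the successors, whose degrees may be large — that controls $\delta_G(I^*)$. This is where the hypothesis ``at most one good neighbor'' is spent: it forces $|S|=|B|\geq d_G(x)-1$, precisely enough to cancel the gain $+|S|$ in the size against the bound $\delta_G(I^*)\leq|S|+1$ and land exactly at $\alpha$.
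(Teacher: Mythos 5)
Your proof is correct and follows essentially the same route as the paper: negate the claim, adjoin the ``bad'' successors (your $S$ is exactly the paper's $Y^+\setminus N_G(H)$) to a maximum independent set of $H$ containing $x$, and use $x$ as the low-degree witness to force $f(G)\geq\alpha$. The only difference is presentational — you spell out the independence checks (including the insertion argument showing $x$ is non-adjacent to successors of its neighbors, which is in fact already implied by $S\cap N_G(H)=\emptyset$) that the paper leaves implicit.
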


\begin{proof}
    For an isolated vertex $x$ of $H$, we set $Y^+=\{y^+\mid y\in N_G(x)\cap V(F)\}$.
    Suppose, for the sake of contradiction, that $|Y^+\cap N_G(H)|\leq 1$.
    Let $I$ be a maximum independent set of $H$.
    Note that $x\in I$ since $x$ is an isolated vertex of $H$.
    By Claim~\ref{claim:next vertex independent}, $Y^+$ is an independent set of $G$, and hence $I':=I\cup (Y^+\setminus N_G(H))$ is an independent set of $G$.
    Since $|Y^+\setminus N_G(H)|\geq |Y^+|-1=d_G(x)-1$,
    \[f(G)\geq |I'|-\delta_G(I')+1\geq (|I|+|Y^+\setminus N_G(H)|)-d_G(x)+1\geq \alpha+d_G(x)-1-d_G(x)+1=\alpha,\]
    a contradiction.
\end{proof}

\begin{claim}\label{claim:degree 1 vertex}
    For every vertex $x$ of $H$ with $d_H(x)=1$, there is a vertex $y\in N_G(x)\cap V(F)$ such that $N_G(y^+)\cap V(H)\neq\emptyset$.
\end{claim}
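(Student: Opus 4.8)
The plan is to mirror the argument of Claim~\ref{claim:degree 0 vertex}, adjusting for the fact that $x$ now has one neighbor inside $H$ rather than none, and that we only need to produce a single good successor rather than two. I would argue by contradiction, assuming that $N_G(y^+)\cap V(H)=\emptyset$ for every $y\in N_G(x)\cap V(F)$; writing $Y^+=\{y^+\mid y\in N_G(x)\cap V(F)\}$, this says exactly that $Y^+\cap N_G(H)=\emptyset$.

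The goal is to build an independent set of $G$ large enough to force $f(G)\geq\alpha$. First I would fix a maximum independent set $I$ of $H$ with $x\in I$. This is the one place where the degree-$1$ case genuinely differs from Claim~\ref{claim:degree 0 vertex}: an isolated vertex lies in every maximum independent set automatically, whereas a degree-$1$ vertex need not. Since $d_H(x)=1$ makes $x$ a leaf of its tree component, Observation~\ref{obs:leaf independent set} (applied to that component and combined with maximum independent sets of the remaining components) supplies such an $I$. By Claim~\ref{claim:next vertex independent} the set $Y^+$ is independent in $G$, and under the contradiction hypothesis no vertex of $Y^+$ has a neighbor in $V(H)\supseteq I$; since moreover $Y^+\subseteq V(F)$ is disjoint from $V(H)$, the union $I':=I\cup Y^+$ is independent in $G$.

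It then remains to count. The successor map $y\mapsto y^+$ is injective on $V(F)$, so $|Y^+|=|N_G(x)\cap V(F)|=d_G(x)-d_H(x)=d_G(x)-1$, giving $|I'|=\alpha+d_G(x)-1$. Since $x\in I'$ we have $\delta_G(I')\leq d_G(x)$, and hence $f(G)\geq |I'|-\delta_G(I')+1\geq\alpha$, contradicting $\alpha>f(G)$.

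The only real obstacle is the step singled out above---guaranteeing a maximum independent set of $H$ through the leaf $x$---which is precisely what Observation~\ref{obs:leaf independent set} is designed to handle; the remainder is a direct transcription of Claim~\ref{claim:degree 0 vertex}. It is worth noting the pleasant coincidence that the count $d_G(x)-1$ arises here from $d_H(x)=1$, playing the same numerical role that discarding one possibly-bad successor played in Claim~\ref{claim:degree 0 vertex}.
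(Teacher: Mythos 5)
Your proposal is correct and follows essentially the same route as the paper's proof: contradiction via the independent set $I\cup Y^+$, with Observation~\ref{obs:leaf independent set} supplying a maximum independent set of $H$ through the leaf $x$, and the count $|Y^+|=d_G(x)-1$ closing the argument. The only difference is expository (you spell out the forest-versus-tree point and the injectivity of $y\mapsto y^+$, which the paper leaves implicit).
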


\begin{proof}
    For a vertex $x$ of $H$ with $d_H(x)=1$, set $Y^+=\{y^+\mid y\in N_G(x)\cap V(F)\}$.
    Note that $|Y^+|=|N_G(x)\cap V(F)|=d_G(x)-1$.
    Assume, for the sake of contradiction, that $Y^+\cap N_G(H)=\emptyset$.
    By Observation~\ref{obs:leaf independent set}, $H$ has a maximum independent set $I$ that contains $x$.
    By Claim~\ref{claim:next vertex independent}, $Y^+$ is an independent set of $G$.
    This, together with the assumption that $Y^+\cap N_G(H)=\emptyset$ implies that $I':=I\cup Y^+$ is an independent set of $G$ that contains $x$, and hence
    \[f(G)\geq |I'|-\delta_G(I')+1\geq (|I|+|Y^+|)-d_G(x)+1=\alpha+d_G(x)-1-d_G(x)+1=\alpha,\]
    a contradiction.
\end{proof}

In the rest of the proof, using Claims~\ref{claim:degree 0 vertex} and \ref{claim:degree 1 vertex}, we shall construct a 2-regular subgraph of $G$ which contradicts the choice of $F$.

Let $D_0$ be a component of $H$ and choose $x_0\in V(D_0)$ with $d_{D_0}(x_0)\leq 1$ arbitrarily.
By Claims~\ref{claim:degree 0 vertex} and \ref{claim:degree 1 vertex}, there is a vertex $y_0\in N_G(x_0)\cap V(F)$ such that $N_G(y_0^+)\cap V(H)\neq \emptyset$.
Let $z_0$ be a vertex in $N_G(y_0^+)\cap V(H)$.
For $i=1,2,\dots$, we sequentially define $(D_i,x_i,y_i,z_i)$ in the following procedure until
$z_i\in \bigcup_{j=0}^{i}V(D_j)$.

Suppose that $(D_j,x_j,y_j,z_j)$ is defined for each $0\leq j\leq i-1$.
\begin{enumerate}
    \item Let $D_i$ be the component that contains $z_{i-1}$
    \item We define $x_i\in D_i$ and $y_i\in N_G(x_i)\cap V(F)$ as follows.
    \begin{enumerate}
        \item If $D_i$ is isomorphic to $K_1$, then let $x_i=z_{i-1}$. By Claim~\ref{claim:degree 0 vertex}, there is a vertex $y_i\in N_G(x_i)\cap (V(F)\setminus \{y_{i-1}^+\})$ such that $N_G(y_i^+)\cap V(H)\neq \emptyset$.
        \item If $D_i$ is not isomorphic to $K_1$, then let $x_i$ be a leaf of $D_i$ distinct from $z_{i-1}$.
        By Claim~\ref{claim:degree 1 vertex}, there is a vertex $y_i\in N_G(x_i)\cap V(F)$ such that $N_G(y_i^+)\cap V(H)\neq \emptyset$. 
        Note that it is possible that $y_i=y_{i-1}^+$ in this case.
    \end{enumerate}
    \item If there is a component $D_j\in \{D_0,\dots ,D_i\}$ such that $N_G(y_i^+)\cap V(D_j)\neq \emptyset$, then let $z_i\in N_G(y_i^+)\cap V(D_j)$ so that the index $j$ is as large as possible.
    Otherwise, let $z_i$ be an arbitrary vertex in $N_G(y_i^+)\cap V(H)$.
\end{enumerate}

Since the number of components of $H$ is finite, this procedure must end.
Without loss of generality, we may assume that the procedure stops at $(D_r,x_r,y_r,z_r)$ and $z_r\in V(D_0)$.
Furthermore, we may assume that $y_r^+$ is not adjacent to any components in $\{D_1,\dots , D_r\}$, and hence $y_r\notin \{y_1,\dots , y_{r-1}\}$.

By the choice of $r$, we know that $D_0,\dots , D_r$ are distinct components of $H$.
Also, for every $i$ and $j$ with $0\leq i<j\leq r-1$, if $y_j\in \{y_i^-,y_i\}$, then we can choose $D_i$ or $D_{i+1}$ as $D_{j+1}$, which contradicts the choice of $r$.
We consider the following two cases.

\medskip
\noindent
\textit{Case 1.} $y_r=y_0^-$.
\medskip

For each $i\in \{1,\dots ,r\}$, let $W_i$ be the $y_{i-1}^+ y_i$-walk $y_{i-1}^+ z_{i-1} P_i x_i y_i$, where $P_i$ is the unique $z_{i-1} x_i$-path in $D_i$.
If $D_i$ is isomorphic to $K_1$, then by 2(a), we chose $y_i$ so that $y_i\neq y_{i-1}^+$.
Thus, if $y_i=y_{i-1}^+$, then we know that $x_i\neq z_{i-1}$, and hence $W_i$ is a cycle of $G$.
Otherwise, $W_i$ is a $y_{i-1}^+ y_i$-path of $G$.

Since $D_1, \dots , D_r$ are pairwise distinct components of $H$, $V(W_i)\setminus \{y_{i-1}^+,y_i\}$ and $V(W_j)\setminus \{y_{j-1}^+,y_j\}$ are disjoint for different $i$ and $j$, and in particular, $W_1,\dots ,W_r$ are pairwise edge-disjoint.
Let $F_1$ be a graph obtained from $F$ by removing the edges $\{y_iy_i^+\mid 0\leq i\leq r\}$, adding the walks $W_1,\dots ,W_r$, and deleting $y_0$.
We can check that $F_1$ is a 2-regular subgraph of $G$ as follows.
It is easy to see that every vertex in $V(F_1)\setminus \bigcup_{i=0}^r\{y_i,y_i^+\}$ has degree 2 in $F_1$.
For each $i\in \{1,\dots ,r\}$, $y_i$ originally has degree two in $F$, loses degree one by deleting $y_iy_i^+$, and gains degree one by adding $W_i$, resulting in $d_{F_1}(y_i)=2+1-1=2$.
Similarly, for each $i\in \{0,\dots ,r-1\}$, $y_i^+$ originally has degree two in $F$, loses degree one by deleting $y_iy_i^+$, and gains degree one by adding $W_{i+1}$, and hence $d_{F_1}(y_i^+)=2+1-1=2$.
Note that when $y_{i-1}^+=y_i$, $y_{i-1}^+$ loses degree two by deleting $\{y_{i-1}y_{i-1}^+,y_iy_i^+\}$ and gains degree two by adding $W_i$.
Combining these, we conclude that $F_1$ is 2-regular.

If $r\geq 2$, then we have 
\[
|V(F_1)|=|V(F)\setminus \{y_0\}|+\sum_{i=1}^r |V(W_i)\setminus \{y_{i-1}^+,y_i\}|\geq |V(F)|-1+r>|V(F)|,
\]
a contradiction by the maximality of $|V(F)|$.
Similarly, if $r=1$ and $D_1$ is not isomorphic to $K_1$, the choice of $x_1$ implies that $|V(W_1)|\geq 4$, and hence 
\[|V(F_1)| \geq |V(F)|-1+|V(W_1)|-2 > |V(F)|,\]
a contradiction again.
Thus, we conclude that $r=1$ and $D_1$ is isomorphic to $K_1$, which implies that $V(F_1)=(V(F)\setminus \{y_0\})\cup \{z_0\}$.
Then, since $y_0$ is adjacent to a component $D_0$ of $H$ and $z_0$ is an isolated vertex of $H$, the number of isolated vertices of $G-V(F_1)$ is strictly less than that of $H=G-V(F)$, a contradiction.

\medskip
\noindent
\textit{Case 2.} $y_r\neq y_0^-$.
\medskip

We define $W_1,\dots , W_r$ similarly to Case 1.
Also, since $y_r^+\neq y_0$, let $W_0$ be a $y_r^+ y_0$-walk $y_r^+ z_r P_0 x_0 y_0$ of $G$, where $P_0$ is the unique $z_r x_0$-path in $D_0$.

Using an argument similar to that in the previous case, we infer that $V(W_i)\setminus \{y_{i-1}^+,y_i\}$ and $V(W_j)\setminus \{y_{j-1}^+,y_j\}$ are disjoint for any $0\leq i<j\leq r$, and that $W_0, \dots ,W_r$ are pairwise edge-disjoint. 
Let $F_2$ be a subgraph of $G$ obtained from $F$ by removing the edges $\{y_iy_i^+\mid 0\leq i\leq r\}$ and adding the walks $W_0,\dots ,W_r$.

Then, we can check that $F_2$ is a 2-regular subgraph of $G$ with 
\[
|V(F_2)|=|V(F)|+\sum_{i=1}^r |V(W_i)\setminus \{y_{i-1}^+,y_i\}|\geq |V(F)|+r>|V(F)|,
\]
a contradiction by the maximality of $|V(F)|$.
This completes the proof of Theorem~\ref{thm:main}.

\section{Remarks on algorithmic aspect of pseudo 2-factor}

Our proof gives an algorithm to find a pseudo 2-factor with at most $f(G)$ non-cycle components, but not a pseudo 2-factor with minimum number of non-cycle components.
It is known that there is a polynomial-time algorithm to give a maximum 2-regular subgraph of a given graph.
We remark that, for a given graph $G$, a pseudo 2-factor with minimum number of non-cycle components does not always contain a maximum 2-regular subgraph.
For instance, a graph in Figure~\ref{fig:algorithm} of 22 vertices has a maximum 2-regular subgraph of order 19, but every pseudo 2-factor with minimum number of non-cycle components contains a 2-factor with 18 vertices.
Thus, the following question remains open.

\begin{question}
    Is there a polynomial-time algorithm to find a pseudo 2-factor with minimum number of non-cycle components?
\end{question}

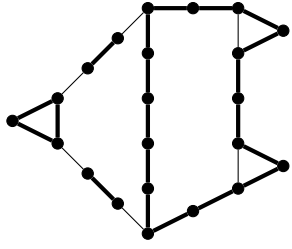
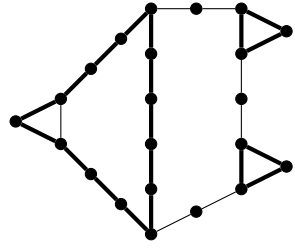
\begin{figure}
\centering
\begin{minipage}{0.45\columnwidth}
    \centering
    \begin{tikzpicture}[roundnode/.style={circle, draw=black,fill=black, minimum size=1.5mm, inner sep=0pt}]
        \node [roundnode] (v1) at (0,3){};
        \node [roundnode] (v2) at (0,2.4){};
        \node [roundnode] (v3) at (0,1.8){};
        \node [roundnode] (v4) at (0,1.2){};
        \node [roundnode] (v5) at (0,0.6){};
        \node [roundnode] (v6) at (0,0){};
        
        \node [roundnode] (u1) at (-0.4,2.6){};
        \node [roundnode] (u2) at (-0.8,2.2){};
        \node [roundnode] (u3) at (-1.2,1.8){};
        \node [roundnode] (u4) at (-1.8,1.5){};
        \node [roundnode] (u5) at (-1.2,1.2){};
        \node [roundnode] (u6) at (-0.8,0.8){};
        \node [roundnode] (u7) at (-0.4,0.4){};

        \node [roundnode] (w1) at (0.6,3){};
        \node [roundnode] (w2) at (1.2,3){};
        \node [roundnode] (w3) at (1.8,2.7){};
        \node [roundnode] (w4) at (1.2,2.4){};
        \node [roundnode] (w5) at (1.2,1.8){};
        \node [roundnode] (w6) at (1.2,1.2){};
        \node [roundnode] (w7) at (1.8,0.9){};
        \node [roundnode] (w8) at (1.2,0.6){};
        \node [roundnode] (w9) at (0.6,0.3){};

        \draw [ultra thick] (v1)--(v2)--(v3)--(v4)--(v5)--(v6)--(w9)--(w8)--(w7)--(w6)--(w5)--(w4)--(w3)--(w2)--(w1)--(v1);
        \draw [ultra thick] (u1)--(u2);
        \draw [ultra thick] (u3)--(u4)--(u5)--(u3);
        \draw [ultra thick] (u6)--(u7);

        \draw (v1)--(u1);
        \draw (u2)--(u3);
        \draw (u5)--(u6);
        \draw (u7)--(v6);
        \draw (w2)--(w4);
        \draw (w6)--(w8);
    \end{tikzpicture}
    \subcaption{A pseudo 2-factor with minimum number of non-cycle components.}
    \label{fig:pseudo 2factor}
\end{minipage}
\begin{minipage}{0.45\columnwidth}
    \centering
    \begin{tikzpicture}[roundnode/.style={circle, draw=black,fill=black, minimum size=1.5mm, inner sep=0pt}]
        \node [roundnode] (v1) at (0,3){};
        \node [roundnode] (v2) at (0,2.4){};
        \node [roundnode] (v3) at (0,1.8){};
        \node [roundnode] (v4) at (0,1.2){};
        \node [roundnode] (v5) at (0,0.6){};
        \node [roundnode] (v6) at (0,0){};
        
        \node [roundnode] (u1) at (-0.4,2.6){};
        \node [roundnode] (u2) at (-0.8,2.2){};
        \node [roundnode] (u3) at (-1.2,1.8){};
        \node [roundnode] (u4) at (-1.8,1.5){};
        \node [roundnode] (u5) at (-1.2,1.2){};
        \node [roundnode] (u6) at (-0.8,0.8){};
        \node [roundnode] (u7) at (-0.4,0.4){};

        \node [roundnode] (w1) at (0.6,3){};
        \node [roundnode] (w2) at (1.2,3){};
        \node [roundnode] (w3) at (1.8,2.7){};
        \node [roundnode] (w4) at (1.2,2.4){};
        \node [roundnode] (w5) at (1.2,1.8){};
        \node [roundnode] (w6) at (1.2,1.2){};
        \node [roundnode] (w7) at (1.8,0.9){};
        \node [roundnode] (w8) at (1.2,0.6){};
        \node [roundnode] (w9) at (0.6,0.3){};

        \draw [ultra thick] (v1)--(v2)--(v3)--(v4)--(v5)--(v6)--(u7)--(u6)--(u5)--(u4)--(u3)--(u2)--(u1)--(v1);
        \draw [ultra thick] (w2)--(w3)--(w4)--(w2);
        \draw [ultra thick] (w6)--(w7)--(w8)--(w6);

        \draw (v1)--(w1)--(w2);
        \draw (w4)--(w5)--(w6);
        \draw (w8)--(w9)--(v6);
        \draw (u3)--(u5);
    \end{tikzpicture}
    \subcaption{A maximum 2-regular subgraph.}
    \label{fig:max 2factor}
\end{minipage}
\caption{An example of a graph in which every pseudo 2-factor with minimum number of non-cycle components does not contain maximum 2-regular subgraphs.}
\label{fig:algorithm}
\end{figure}

\section*{Acknowledgement}
The author thanks Professor Katsuhiro Ota for giving me helpful comments.
The author is supported by JSPS KAKENHI grant number 25KJ2077.

\end{document}